\newtheorem{theorem}{Theorem}[section]
\newtheorem{lemma}[theorem]{Lemma}
\newtheorem{proposition}[theorem]{Proposition}
\newtheorem{definition}[theorem]{Definition}
\newtheorem{remark}[theorem]{Remark}
\newcommand{\p}{\mathfrak{p}}
\newcommand{\bp}{\overline{\p}}
\newcommand{\QQ}{\mathbb{Q}}
\newcommand{\ZZ}{\mathbb{Z}}
\newcommand{\Zp}{\ZZ_p}
\newcommand{\Qp}{\QQ_p}
\newcommand{\Gal}{\operatorname{Gal}}
\begin{document}

\title{Factorisation of two-variable $p$-adic $L$-functions}
\author{Antonio Lei}
\address{Department of Mathematics and Statistics\\
Burnside Hall\\
McGill University\\
Montreal QC\\
Canada H3A 0B9}
\email{antonio.lei@mcgill.ca}
\thanks{The author is supported by a CRM-ISM fellowship.}

\subjclass[2000]{11S40, 11S80 (primary)}
\maketitle
\begin{abstract}
Let $f$ be a modular form which is non-ordinary at $p$. Kim and Loeffler have recently constructed two-variable $p$-adic $L$-functions associated to $f$. In the case where $a_p=0$, they showed that, as in the one-variable case, Pollack's plus and minus splitting applies to these new objects. In this short note, we show that such a splitting can be generalised to the case where $a_p\ne0$ using Sprung's logarithmic matrix.
\end{abstract}
\section{$p$-adic logarithmic matrices}
We first review the theory of Sprung's factorisation of one-variable $p$-adic $L$-functions in \cite{sprung09,sprung12}, which is a generalisation of Pollack's work \cite{pollack03}.

Let $f=\sum_{n\ge1} a_n q^n$ be a normalised eigen-newform of weight $2$ and level $N$. Fix an odd\footnote{Our results in fact hold for $p=2$. Since the interpolation formulae of $p$-adic $L$-functions are slightly different from the other cases, we assume $p\ne2$ for notational simplicity.} prime $p$ that does not divide $N$ and $v_p(a_p)\ne0$. Here, $v_p$ is the $p$-adic valuation. Let $\alpha$ and $\beta$ be the two roots to
\[
X^2-a_pX+\epsilon(p)p=0
\]
with $r=v_p(\alpha)$ and $s=v_p(\beta)$. Note in particular that $0<r,s<1$.

Let $G$ be a one-dimensional $p$-adic Lie group, which is of the form $\Delta\times \langle\gamma_p\rangle$,
where $\Delta$ is a finite abelian group and $\langle\gamma_p\rangle\cong\Zp$.  Let $F$ be a finite extension of $\Qp$ that contains $\mu_{|\Delta|}$, $a_n$ and $\epsilon(n)$ for all $n\ge1$. For a real number $u\ge0$, we define $ D^{(u)}(G,F)$ for the set of distributions $\mu$ on $G$ which are of the form
\[
\sum_{n\ge0}\sum_{\sigma\in\Delta}c_{\sigma,n}\sigma (\gamma_p-1)^n
\]
where $c_{\sigma,n}\in F$ and $\sup_n \frac{|c_{\sigma,n}|_p}{n^u}<\infty$ for all $\sigma\in\Delta$ (here $|\cdot|_p$ denotes the $p$-adic norm with $|p|_p=p^{-1}$). Let $X=\gamma_p-1$. If $\eta$ is a character on $\Delta$, we write $e_\eta\mu$ for the $\eta$-isotypical component of $\mu$, namely, the power series
\[
\sum_{n\ge0}\sum_{\sigma\in\Delta}c_{\sigma,n}\eta(\sigma) (\gamma_p-1)^n
\in F[[X]].
\]
For $\mu_1\in D^{(u)}(\langle\gamma_p\rangle,F)$ and $\mu_2\in D^{(u)}(G,F)$, we say that $\mu_1$ divides $\mu_2$ over $D^{(u)}(G,F)$ if $\mu_1$ divides all isotypical components of  $\mu_2$ as elements in $F[[X]]$.

\begin{definition}
We say that $(\mu_\alpha,\mu_\beta)\in D^{(r)}(G,F)\oplus D^{(s)}(G,F)$ is a pair of interpolating functions for $f$ if for all non-trivial characters $\omega$ on $G$ that send $\gamma_p$ to a primitive $p^{n-1}$-st root of unity for some $n\ge1$, there exists a constant $C_\omega\in \overline{F}$ such that
\[
\mu_\alpha(\omega)=\alpha^{-n}C_\omega\quad\text{and}\quad \mu_\beta(\omega)=\beta^{-n}C_\omega.
\]
\end{definition}

\begin{remark}
The $p$-adic $L$-functions $L_\alpha, L_\beta$ of Amice-V\'{e}lu \cite{amicevelu75} and Vi{\v{s}}ik \cite{visik76} associated to $f$ satisfy the property stated above, with $G$ being the Galois group $\Gal(\QQ(\mu_{p^\infty})/\QQ)$ and $C_\omega$ being the algebraic part of the complex $L$-value $L(f,\omega^{-1},1)$ multiplied by some fudge factor.
\end{remark}

\begin{definition}
A  matrix $M_p=\begin{pmatrix}
m_{1,1}& m_{1,2}\\
m_{2,1}&m_{2,2}
\end{pmatrix}$  with $m_{1,1},m_{2,1}\in D^{(r)}(\langle\gamma_p\rangle,F)$ and $m_{1,2},m_{2,2}\in D^{(s)}(\langle\gamma_p\rangle,F)$, is called a $p$-adic logarithmic matrix associated to $f$ if $\det(M_p)$ is, up to a constant in $F^\times$, equal to $\log_p(\gamma_p)/(\gamma_p-1)$, and  $\det(M_p)$ divides both
\[
m_{2,2}\mu_\alpha-m_{2,1}\mu_\beta\quad\text{and}\quad -m_{1,2}\mu_\alpha+m_{1,1}\mu_\beta
\]
over $ D^{(1)}(G,F)$ for all interpolating functions $\mu_\alpha$, $\mu_\beta$ for $f$. 
\end{definition}

\begin{lemma}\label{lem:factorisation}
Let  $\mu_\alpha$, $\mu_\beta$ be a pair of interpolating functions  for $f$. If $M_p$ is a $p$-adic logarithm  matrix associated to $f$, then there exist  $\mu_{\#},\mu_{\flat}\in D^{(0)}(G,F)$ such that
\[
\begin{pmatrix}
\mu_{\alpha}&\mu_{\beta}
\end{pmatrix}
=\begin{pmatrix}
\mu_{\#}&\mu_{\flat}
\end{pmatrix}M_p.
\]
\end{lemma}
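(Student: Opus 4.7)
The plan is to solve the matrix equation directly by Cramer's rule and then check that the resulting distributions have the correct growth rate. Writing the desired identity as the system
\[
\mu_\alpha = \mu_\# m_{1,1} + \mu_\flat m_{2,1}, \qquad \mu_\beta = \mu_\# m_{1,2} + \mu_\flat m_{2,2},
\]
and inverting over the fraction field forces the definitions
\[
\mu_\# := \frac{m_{2,2}\mu_\alpha - m_{2,1}\mu_\beta}{\det(M_p)}, \qquad \mu_\flat := \frac{-m_{1,2}\mu_\alpha + m_{1,1}\mu_\beta}{\det(M_p)}.
\]
With these formulas in hand, the matrix equation of the lemma holds by construction, so the entire content of the statement lies in verifying that $\mu_\#$ and $\mu_\flat$ actually lie in $D^{(0)}(G,F)$.

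First I would check that the two ratios are well-defined as elements of $F[[X]]$ on each isotypic component. Each numerator is a sum of products of an element of $D^{(s)}(\langle\gamma_p\rangle,F)$ with one of $D^{(r)}(G,F)$ (or vice versa), hence lies in $D^{(r+s)}(G,F) = D^{(1)}(G,F)$, using the identity $r + s = v_p(\alpha\beta) = v_p(\epsilon(p)p) = 1$. By the defining property of the logarithmic matrix, $\det(M_p)$ divides each of these numerators isotypically in $F[[X]]$, so $\mu_\#$ and $\mu_\flat$ are at worst well-defined power series on each isotypic component.

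The main step is to upgrade this to $\mu_\#, \mu_\flat \in D^{(0)}(G,F)$, i.e.\ to show that these quotients are bounded rather than merely of growth rate $\leq 1$. Since $\det(M_p)$ agrees with $\log_p(\gamma_p)/(\gamma_p - 1)$ up to a nonzero scalar in $F$, this reduces to the following statement about one-variable power series: if $g \in F[[X]]$ has growth rate at most $1$ and $\log_p(1+X)/X$ divides $g$ in $F[[X]]$, then $g \cdot X/\log_p(1+X)$ is bounded. This is a standard consequence of the slopes of the Newton polygon of $\log_p(1+X)$ matching the growth rate $1$, and is used in essentially the same form in \cite{pollack03} and \cite{sprung09}. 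Applying this quotient result to each $e_\eta$ component of the numerators produces $e_\eta\mu_\#, e_\eta\mu_\flat \in D^{(0)}$, and assembling over characters $\eta$ of $\Delta$ yields $\mu_\#, \mu_\flat \in D^{(0)}(G,F)$.

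The principal obstacle is this growth-rate reduction from $D^{(1)}$ to $D^{(0)}$ upon dividing by $\log_p$; once that analytic input is cited, the matrix equation itself is automatic from the Cramer's rule construction.
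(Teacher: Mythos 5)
Your proposal is correct and follows essentially the same route as the paper: define $\mu_\#$ and $\mu_\flat$ by the adjugate/Cramer formulas, observe that the numerators lie in $D^{(1)}(G,F)$ and are divisible by $\det(M_p)$ by the defining property of a logarithmic matrix, and then use the fact that dividing by a series with the growth of $\log_p(1+X)$ drops the growth rate from $1$ to $0$. Your write-up merely makes explicit two points the paper leaves implicit (the $r+s=1$ computation for the numerators and the Newton-polygon input behind the $D^{(1)}\to D^{(0)}$ reduction), which is fine.
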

\begin{proof}
Let
\[
\mu_{\#}:=\frac{m_{2,2}\mu_\alpha-m_{2,1}\mu_\beta}{\det(M_p)}\quad\text{and}\quad \mu_{\flat}:=\frac{-m_{1,2}\mu_\alpha+m_{1,1}\mu_\beta}{\det(M_p)}.
\]
By definition, the numerators lie inside $D^{(1)}(G,F)$ and the coefficients of $\det(M_p)$ have the same growth rate as those of $\log_p(\gamma_p)$, so $\mu_{\#}$ and $\mu_{\flat}$ lie inside $D^{(0)}(G,F)$. The factorisation follows from the fact that
\[
\begin{pmatrix}
m_{2,2}&-m_{1,2}\\ -m_{2,1}&m_{1,1}
\end{pmatrix}M_p=\begin{pmatrix}\det(M_p)&0\\0&\det(M_p)\end{pmatrix}.
\]
\end{proof}

We now recall the construction of Sprung's canonical $p$-adic logarithmic matrix associated to $f$.

Let $C_n=\begin{pmatrix}
a_p&1\\-\epsilon(p)\Phi_{p^n}(\gamma_p)&0
\end{pmatrix}$, where $\Phi_{p^n}$ denotes the $p^n$-th cyclotomic polynomial for $n\ge1$, $C=\begin{pmatrix}a_p&1\\-\epsilon(p)p&0\end{pmatrix}$ and $A=\begin{pmatrix}
-1&-1\\\beta&\alpha
\end{pmatrix}$. Define 
\[
M_p^{(n)}:=C_1\cdots C_n C^{-n-2}A.
\]

\begin{theorem}[Sprung]\label{thm:matrix}
The entries of the sequence of matrices $M_p^{(n)}$ converge (under the standard sup-norm on $p$-adic power series) in $D^{(1)}(\langle\gamma_p\rangle,F)$ as $n\rightarrow\infty$ and the limit $\displaystyle\lim_{n\rightarrow\infty}M_p^{(n)}$ is a $p$-adic logarithmic matrix associated to $f$.
\end{theorem}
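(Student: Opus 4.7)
The plan is to establish three logically separate claims about $M_p^{(n)}$: (i) convergence in $D^{(1)}(\langle\gamma_p\rangle,F)$ with the correct column-wise growth rates, (ii) identification of the limiting determinant up to a constant in $F^\times$, and (iii) the divisibility condition in the definition of a $p$-adic logarithmic matrix. Step (ii) is a short computation, step (i) rests on a single analytic input, and step (iii) is the conceptual heart.

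For (ii), multiplicativity combined with $\det C_k = \epsilon(p)\Phi_{p^k}(\gamma_p)$, $\det C = \epsilon(p)p$, and $\det A = \beta-\alpha$ gives
\[
\det M_p^{(n)} = \frac{\beta-\alpha}{(\epsilon(p)p)^{2}}\prod_{k=1}^{n}\frac{\Phi_{p^k}(\gamma_p)}{p},
\]
and the classical product formula $\log_p(\gamma_p)/(\gamma_p-1) = \prod_{k\geq 1}\Phi_{p^k}(\gamma_p)/p$ identifies the limit, with prefactor $(\beta-\alpha)(\epsilon(p)p)^{-2} \in F^\times$. For (i), I would exploit the fact that $C_k$ differs from $C$ only in the $(2,1)$-entry, by $\epsilon(p)(p-\Phi_{p^k}(\gamma_p))$, and regroup as $M_p^{(n)} = (C_1 \cdots C_n C^{-n})(C^{-2}A)$. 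The correction $C_k C^{-1} - I$ is controlled by $\Phi_{p^k}(\gamma_p)/p - 1$, so the same product that controls $\det M_p^{(n)}$ also controls the matricial tail. Right multiplication by $C^{-n-2}A = A\,\mathrm{diag}(\alpha,\beta)^{-n-2}$ scales the two columns by $\alpha^{-n-2}$ and $\beta^{-n-2}$, which places them in $D^{(r)}$ and $D^{(s)}$ respectively.

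For (iii), observe that $r + s = v_p(\alpha\beta) = v_p(\epsilon(p)p) = 1$, so products of the form $m_{i,j}\mu_\alpha$ and $m_{i,j}\mu_\beta$ automatically lie in $D^{(1)}$, and the only question is divisibility of power series in each isotypic component. A $p$-adic Weierstrass argument applied to $\log_p(\gamma_p)/(\gamma_p-1)$, whose zeros are simple and located at $\gamma_p = \zeta$ for $\zeta$ a primitive $p^{n-1}$-st root of unity with $n\geq 2$, reduces this to showing that $m_{2,2}\mu_\alpha - m_{2,1}\mu_\beta$ and $-m_{1,2}\mu_\alpha + m_{1,1}\mu_\beta$ vanish at every such character $\omega$. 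Fix such an $\omega$. The key observation is that $\Phi_{p^k}(\zeta) = p$ for every $k\geq n$, so $C_k(\zeta) = C$ for $k \geq n$, and $M_p^{(N)}(\omega)$ collapses to the same finite expression $C_1(\zeta)\cdots C_{n-1}(\zeta)C^{-n-1}A$ for every $N\geq n$, giving a closed form for $M_p(\omega)$. Because $\Phi_{p^{n-1}}(\zeta) = 0$, the matrix $C_{n-1}(\zeta)$ factors as the outer product $\binom{1}{0}(a_p,1)$, so $M_p(\omega) = v\cdot (a_p,1)C^{-n-1}A$ for some column vector $v$. Diagonalising $C = A\,\mathrm{diag}(\alpha,\beta)A^{-1}$ and using $(a_p,1)A = (-\alpha,-\beta)$ (which follows from $\alpha+\beta=a_p$) yields $(a_p,1)C^{-n-1}A = -(\alpha^{-n},\beta^{-n})$. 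Therefore $M_p(\omega) = -v(\alpha^{-n},\beta^{-n})$ has row space spanned by $(\alpha^{-n},\beta^{-n})$, which is exactly parallel to $(\mu_\alpha(\omega),\mu_\beta(\omega)) = C_\omega(\alpha^{-n},\beta^{-n})$; both determinantal expressions therefore vanish at $\omega$, as required.

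The main obstacles I expect are the analytic estimates in (i) and the Weierstrass-type bridge in (iii), both of which demand honest control of distribution-valued objects rather than purely pointwise computations. The combinatorial collapse of $M_p(\omega)$ at a finite-order character, by contrast, is the conceptually clean core that makes the whole scheme work.
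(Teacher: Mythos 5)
Your proposal is correct and follows essentially the same route as the paper's (admittedly sketchy) proof: the conceptual core in both is the rank-one collapse of $M_p(\omega)$ at a character of exact order $p^{n-1}$ via $C_{n-1}(\omega)=\left(\begin{smallmatrix}a_p&1\\0&0\end{smallmatrix}\right)$ and the identity $(a_p,1)C^{-n-1}A=-(\alpha^{-n},\beta^{-n})$, which forces both determinantal combinations to vanish at every zero of $\log_p(\gamma_p)/(\gamma_p-1)$; and the growth estimate in both comes from the integrality of $C_1\cdots C_n$ together with the column scaling by $\alpha^{-n-2}$ and $\beta^{-n-2}$ in $C^{-n-2}A=A\,\mathrm{diag}(\alpha^{-n-2},\beta^{-n-2})$. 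Two small points of comparison. First, you compute $\det M_p$ directly from the product formula $\prod_{k\ge1}\Phi_{p^k}(1+X)/p=\log_p(1+X)/X$, where the paper simply cites Sprung's Remark 2.19; your version is self-contained and gives the same constant $(\beta-\alpha)/(\alpha\beta)^2$. Second, your mechanism for convergence in step (i) is stated loosely: the regrouping of $C_1\cdots C_nC^{-n}$ into a product of corrections $C_kC^{-1}-I$ does not literally go through, since the $C_k$ do not commute with $C$, and the naive coefficient-wise differences $M_p^{(n+1)}-M_p^{(n)}$ are \emph{not} small in sup-norm (they grow like $p^{ns}$). The clean formulation of your observation is the congruence $M_p^{(n+1)}\equiv M_p^{(n)}\bmod\bigl((X+1)^{p^n}-1\bigr)$, coming from $\Phi_{p^{n+1}}(1+X)\equiv p$ modulo $(X+1)^{p^n}-1$; it is the combination of this congruence with the $O(p^{ns})$ growth bound, fed into the Amice--V\'elu/Perrin-Riou convergence criterion (the paper's citation of \cite[\S1.2.1]{perrinriou94}), that produces a limit in $D^{(r)}$, resp.\ $D^{(s)}$. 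You correctly flag this analytic step, and also the Weierstrass bridge from pointwise vanishing to divisibility in $D^{(1)}$, as the places needing honest distribution-level control; the paper leaves both implicit.
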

\begin{proof}
We only sketch our proof here since this is merely a slight generalisation of Sprung's results in \cite{sprung12,sprung09}.

Since $C_{n+1}\equiv C\mod (X+1)^{p^n}-1$, we have
\[
M_p^{(n+1)}\equiv M_p^{(n)}\mod (X+1)^{p^n}-1.
\]
Note that
\[
A^{-1}CA=\begin{pmatrix}
\alpha&0\\0&\beta
\end{pmatrix},
\]
which implies that
\begin{equation}\label{eq:diagonal}
C^{-n-2}A=A\begin{pmatrix}
\alpha^{-n-2}&0\\0&\beta^{-n-2}\end{pmatrix}=
\begin{pmatrix}
-\alpha^{-n-2}&-\beta^{-n-2}\\
\beta\alpha^{-n-2}&\alpha\beta^{-n-2}
\end{pmatrix}.
\end{equation}
Since all the entries in $C_1\cdots C_n$ are integrals, the coefficients of the first (respectively second) row of $M_p^{(n)}$ grow like $O(p^{-r})$ (respectively $O(p^{-s})$) as $n\rightarrow\infty$. Therefore, by \cite[\S1.2.1]{perrinriou94}, the entries of the first (respectively second) row of $M_p^{(n)}$ converge to elements in $D^{(r)}(\langle\gamma_p\rangle,F)$ (respectively $D^{(s)}(\langle\gamma_p\rangle,F)$).

Let $M_p=\begin{pmatrix}
m_{1,1}& m_{1,2}\\
m_{2,1}&m_{2,2}
\end{pmatrix}$ be the limit $\displaystyle\lim_{n\rightarrow\infty}M_p^{(n)}$. If $\omega$ is a character that sends $\gamma_p$ to a primitive $p^{n-1}$-st root of unity, then
\[
M_p(\omega)=C_1(\omega)\cdots C_{n-1}(\omega)C^{-n-1}A.
\]
Note that $C_{n-1}(\omega)=\begin{pmatrix}a_p&1\\0&0\end{pmatrix}$, so from \eqref{eq:diagonal}, we see that there exist two constants $A_\omega,B_\omega\in\overline{F}$ such that
\[
M_p(\omega)=\begin{pmatrix}a_pA_\omega&A_\omega\\ a_pB_\omega& B_\omega\end{pmatrix}\begin{pmatrix}
-\alpha^{-n-1}&-\beta^{-n-1}\\
\beta\alpha^{-n-1}&\alpha\beta^{-n-1}
\end{pmatrix}=
\begin{pmatrix}
-\alpha^{-n}A_\omega&-\beta^{-n}A_\omega\\
-\alpha^{-n}B_\omega&-\beta^{-n}B_\omega
\end{pmatrix}.
\]
In particular, if $\mu_\alpha,\mu_\beta$ is a pair of interpolating functions  for $f$,
\[
m_{2,2}(\omega)\mu_\alpha(\omega)-m_{2,1}(\omega)\mu_\beta(\omega)=-m_{1,2}(\omega)\mu_\alpha(\omega)+m_{1,1}(\omega)\mu_\beta(\omega)=0.
\]

Finally, by \cite[Remark~2.19]{sprung12}, $\det(M_p)=\frac{\log_p(1+X)}{X}\times\frac{\beta-\alpha}{(\alpha\beta)^2}$, hence the result.
\end{proof}

\begin{remark}
Similar logarithmic matrices have been constructed in \cite{leiloefflerzerbes10} using the theory of Wach modules, but they are not canonical.
\end{remark}

%++++++++++++++++++++++++++++++++++++++++++++++++++++++
%++++++++++++++++++++++++++++++++++++++++++++++++++++++

\section{Two-variable $p$-adic $L$-functions}

\subsection{Setup for two-variable distributions}

We now fix an imaginary quadratic field $K$ in which $p$ splits into $\p\bp$. If $\mathfrak{I}$ is an ideal of $K$, we write $G_{\mathfrak{I}}$ for the Ray class group of $K$ modulo $\mathfrak{I}$. Define
\[
G_{p^\infty}=\varprojlim G_{p^n},\quad G_{\p^\infty}=\varprojlim G_{\p^n},\quad G_{{\bp}^\infty}=\varprojlim G_{{\bp}^n}.
\]
These are the Galois groups of the Ray class fields $K(p^\infty)$, $K(\p^\infty)$ and $K({\bp}^\infty)$ respectively. Fix topological generators $\gamma_\p$ and $\gamma_{\bp}$ of the $\Zp$-parts of $G_{\p^\infty}$ and $G_{{\bp}^\infty}$ respectively. We have an isomorphism
\[
G_{p^\infty}\cong \Delta\times\langle\gamma_\p\rangle\times\langle\gamma_{\bp}\rangle,
\]
where $\Delta$ is a finite abelian group. Let $X=\gamma_\p-1$ and $Y=\gamma_{\bp}-1$.  For  real numbers $u,v\ge0$, we define $D^{(u,v)}(G_{p^\infty},F)$ for the set of distributions of $G_{p^\infty}$ which are of the form
\[
\sum_{i,j\ge0}\sum_{\sigma\in\Delta}c_{\sigma,i,j}\sigma X^iY^j,
\] 
where $c_{\sigma,i,j}\in F$ and $\sup_{i,j}\frac{|c_{\sigma,i,j}|_p}{i^uj^v}<\infty$ for all $\sigma\in\Delta$. On identifying each $\Delta$-isotypical component of $\mu$ with a power series in $X$ and $Y$, we have the notion of divisibility as in the one-dimensional case. We define the operators $\partial_\p$ and $\partial_{\bp}$ to be the partial derivatives $\frac{\partial}{\partial X}$ and $\frac{\partial}{\partial Y}$ respectively.

For $\star\in\{\p,\bp\}$, we let $\Omega_\star$ be the set of characters on $G_{p^\infty}$ with conductor $(\star)^n$ for some integer $n\ge1$.

Let $\mu\in D^{(u,v)}(G_{p^\infty},F)$ where $u,v\ge 0$. If $\omega_{\p}\in\Omega_\p$, we define a distribution $\mu^{(\omega_\p)}$ by
\[
\mu^{(\omega_\p)}(\omega_{\bp})=\mu(\omega_\p\omega_{\bp}).
\]
\begin{lemma}
The distribution $\mu^{(\omega_\p)}$ lies inside $D^{(v)}(G_{{\bp}^\infty},F')$, where $F'$ is the extension $F(\omega_{\p}(\gamma_\p))$.
\end{lemma}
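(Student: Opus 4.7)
The plan is a direct computation with power series. Set $\zeta:=\omega_\p(\gamma_\p)$, a $p$-power root of unity, so that $F'=F(\zeta)$. Since $\omega_\p$ factors through $G_{\p^\infty}$ it is trivial on $\langle\gamma_{\bp}\rangle$, and similarly $\omega_{\bp}$ is trivial on $\langle\gamma_\p\rangle$. Evaluating $\mu$ at $\omega_\p\omega_{\bp}$ therefore amounts to substituting $X=\zeta-1$, multiplying the $\Delta$-action by $\omega_\p$, and leaving $Y$ untouched:
\[
\mu(\omega_\p\omega_{\bp})=\sum_{j\ge 0}\sum_{\sigma\in\Delta}\omega_\p(\sigma)\Bigl(\sum_{i\ge 0}c_{\sigma,i,j}(\zeta-1)^i\Bigr)\omega_{\bp}(\sigma)\,\omega_{\bp}(\gamma_{\bp})^j.
\]
Reading the right-hand side off as a function of $\omega_{\bp}$, we identify $\mu^{(\omega_\p)}$ (after pushing through the natural quotient $G_{p^\infty}\twoheadrightarrow G_{\bp^\infty}$) with the $F'$-valued power series
\[
\sum_{j\ge 0}\sum_{\sigma\in\Delta}d_{\sigma,j}\,\sigma Y^j,\qquad d_{\sigma,j}:=\omega_\p(\sigma)\sum_{i\ge 0}c_{\sigma,i,j}(\zeta-1)^i,
\]
whose coefficients manifestly lie in $F'$.

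The remaining step is the growth estimate on the $d_{\sigma,j}$. The key analytic input is that $|\zeta-1|_p<1$ when $\zeta\ne 1$: a primitive $p^k$-th root of unity satisfies $v_p(\zeta-1)=1/(p^{k-1}(p-1))$ for $k\ge 1$. Because exponential decay dominates polynomial growth, $C:=\sup_{i\ge 0}i^u|\zeta-1|_p^i$ is finite. Using the ultrametric inequality together with the hypothesis $\sup_{i,j}|c_{\sigma,i,j}|_p/(i^uj^v)<\infty$, I would conclude
\[
|d_{\sigma,j}|_p\le\sup_{i}\bigl(|c_{\sigma,i,j}|_p\,|\zeta-1|_p^i\bigr)=O(j^v),
\]
which is precisely the condition that $\mu^{(\omega_\p)}\in D^{(v)}(G_{\bp^\infty},F')$. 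The degenerate case $\zeta=1$ is even easier, since then only the $i=0$ term contributes.

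There is no real obstruction here: the statement is a two-variable reformulation of the familiar fact that evaluating a tempered distribution on $\Zp$ at a character of finite $p$-power order produces a bounded value in a finite extension, while preserving the growth order in any variable not being touched. The only bookkeeping is keeping track of the multiplication of the $\Delta$-action by $\omega_\p(\sigma)$, which is absorbed harmlessly into the definition of $d_{\sigma,j}$.
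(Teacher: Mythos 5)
Your proof is correct and follows essentially the same route as the paper: both arguments reduce to the observation that $v_p(\omega_\p(\gamma_\p)-1)>0$ forces $|\omega_\p(\gamma_\p)-1|_p^i$ to decay fast enough that $i^u|\omega_\p(\gamma_\p)-1|_p^i$ is bounded, after which the ultrametric inequality and the growth hypothesis on $c_{\sigma,i,j}$ give the required $O(j^v)$ bound. Your explicit bookkeeping of the $\Delta$-action and the identification of $\mu^{(\omega_\p)}$ with an $F'$-coefficient power series is just a more spelled-out version of what the paper leaves implicit.
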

\begin{proof}
We need to show that
\begin{equation}\label{eq:required}
\sup_j\frac{|\sum_i c_{\sigma,i,j}(\omega_{\p}(\gamma_\p)-1)^i|_p}{j^v}<\infty
\end{equation}
for all $\sigma\in\Delta$.
But $v_p(\omega_\p(\gamma_\p)-1)>0$, so there exists a constant $C$, such that
\[
|(\omega_{\p}(\gamma_\p)-1)^i|_p\le \frac{C}{i^u}
\]
for all $i$. Hence,
\[
\frac{|c_{\sigma,i,j}(\omega_{\p}(\gamma_\p)-1)^i|_p}{j^v}\le C\times\frac{|c_{\sigma,i,j}|_p}{i^uj^v},
\]
which implies \eqref{eq:required}.
\end{proof}

Similarly, for $\omega_{\bp}\in\Omega_{\bp}$, we may define a distribution $\mu^{(\omega_{\bp})}\in D^{(u)}(G_{{\p}^\infty},F')$, where $F'=F(\omega_{\bp}(\gamma_{\bp}))$.

\subsection{Sprung-type factorisation}
Let $L_{\alpha,\alpha}$, $L_{\alpha,\beta}$, $L_{\beta,\alpha}$, $L_{\beta,\beta}$ be the two-variable $p$-adic $L$-functions constructed in \cite{loeffler13} (also \cite{kim11}). By \cite[Theorem~4.7]{loeffler13}, $L_{\star,\bullet}$ is an element of $D^{(v_p(\star),v_p(\bullet))}(G_{p^\infty},F)$ for $\star,\bullet\in\{\alpha,\beta\}$. Moreover, if $\omega$ is a character on $G_{p^\infty}$ of conductor $\p^{n_\p}{\bp}^{n_{\bp}}$ with $n_\p,n_{\bp}\ge1$, we have
\begin{eqnarray}
L_{\alpha,\alpha}(\omega)&=&\alpha^{-n_\p}\alpha^{-n_{\bp}}C_\omega\label{eq:interpolation1}\\
L_{\alpha,\beta}(\omega)&=&\alpha^{-n_\p}\beta^{-n_{\bp}}C_\omega\label{eq:interpolation2}\\
L_{\beta,\alpha}(\omega)&=&\beta^{-n_\p}\alpha^{-n_{\bp}}C_\omega\label{eq:interpolation3}\\
L_{\beta,\beta}(\omega)&=&\beta^{-n_\p}\beta^{-n_{\bp}}C_\omega\label{eq:interpolation4}
\end{eqnarray}
for some $C_\omega\in \overline{F}$ that is independent of $\alpha$ and $\beta$.

Let $M_p$ be the logarithmic matrix given by Theorem~\ref{thm:matrix}. On replacing $\gamma_p$ by $\gamma_\p$ and $\gamma_{\bp}$ respectively, we have two logarithmic matrices $M_\p=\begin{pmatrix}
m_{1,1}^{\p}& m_{1,2}^{\p}\\
m_{2,1}^{\p}&m_{2,2}^{\p}
\end{pmatrix}$ and $M_{\bp}=\begin{pmatrix}
m_{1,1}^{\bp}& m_{1,2}^{\bp}\\
m_{2,1}^{\bp}&m_{2,2}^{\bp}
\end{pmatrix}$ defined over $D^{(1)}(\langle\gamma_\p\rangle,F)$ and $D^{(1)}(\langle\gamma_{\bp}\rangle,F)$ respectively. 

Our goal is to prove the following generalisation of \cite[Corollary~5.4]{loeffler13}.

\begin{theorem}\label{thm:main}
There exist  $L_{\#,\#},L_{\flat,\#},L_{\#,\flat},L_{\flat,\flat}\in D^{(0,0)}(G_{p^\infty},F)$ such that 
 \[
\begin{pmatrix}
L_{\alpha,\alpha}&L_{\beta,\alpha}&L_{\alpha,\beta}&L_{\beta,\beta}
\end{pmatrix}
=\begin{pmatrix}
L_{\#,\#}&L_{\flat,\#}&L_{\#,\flat}&L_{\flat,\flat}
\end{pmatrix}M_{\p}\otimes M_{\bp}.
\]
\end{theorem}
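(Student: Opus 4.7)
The plan is to factorise the four $p$-adic $L$-functions in two stages, once in the $\p$-variable and once in the $\bp$-variable, by a two-variable extension of Lemma~\ref{lem:factorisation}. The tensor product identity will then follow from chaining the two $2\times2$ matrix factorisations.

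For the first stage, I would fix $\bullet\in\{\alpha,\beta\}$ and observe that the pair $(L_{\alpha,\bullet},L_{\beta,\bullet})$ is a \emph{bivariate} pair of interpolating functions for $f$ in the $\p$-direction: the formulas~\eqref{eq:interpolation1}--\eqref{eq:interpolation4} show that for every $\omega=\omega_{\p}\omega_{\bp}$ with $\omega_{\p}\in\Omega_{\p}$ we have $L_{\alpha,\bullet}(\omega)=\alpha^{-n_{\p}}C$ and $L_{\beta,\bullet}(\omega)=\beta^{-n_{\p}}C$ where $C=\bullet^{-n_{\bp}}C_{\omega}$ is common to both. Mimicking the proof of Lemma~\ref{lem:factorisation}, I would then set
\[
L_{\#,\bullet}:=\frac{m_{2,2}^{\p}L_{\alpha,\bullet}-m_{2,1}^{\p}L_{\beta,\bullet}}{\det M_{\p}},\qquad L_{\flat,\bullet}:=\frac{-m_{1,2}^{\p}L_{\alpha,\bullet}+m_{1,1}^{\p}L_{\beta,\bullet}}{\det M_{\p}}.
\]
The formulas $m_{2,1}^{\p}(\omega_{\p})=-\alpha^{-n_{\p}}B_{\omega_{\p}}$ and $m_{2,2}^{\p}(\omega_{\p})=-\beta^{-n_{\p}}B_{\omega_{\p}}$ extracted from the proof of Theorem~\ref{thm:matrix} (and their analogues for $m_{1,j}^{\p}$) imply that each numerator vanishes at every specialisation $\omega_{\p}\omega_{\bp}$ with $\omega_{\p}\in\Omega_{\p}$. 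Upgrading this pointwise vanishing to divisibility by $\det M_{\p}$ in the two-variable distribution ring, and keeping track of growth exactly as in Lemma~\ref{lem:factorisation}, would give $L_{\#,\bullet},L_{\flat,\bullet}\in D^{(0,v_p(\bullet))}(G_{p^\infty},F)$ satisfying
\[
\begin{pmatrix}L_{\alpha,\bullet}&L_{\beta,\bullet}\end{pmatrix}=\begin{pmatrix}L_{\#,\bullet}&L_{\flat,\bullet}\end{pmatrix}M_{\p}.
\]

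For the second stage, I would repeat the same procedure, for each $\star\in\{\#,\flat\}$, on the pair $(L_{\star,\alpha},L_{\star,\beta})$ using $M_{\bp}$, to obtain $L_{\star,\#},L_{\star,\flat}\in D^{(0,0)}(G_{p^\infty},F)$. The bivariate interpolation property in the $\bp$-direction required for the input of this step is inherited from the first stage: the entries $m_{i,j}^{\p}$ and $\det M_{\p}$ depend only on the $\p$-variable, hence commute with the specialisation $(-)^{(\omega_{\bp})}$, so the common-constant structure relative to $\bullet$ is preserved by the formulas defining $L_{\star,\bullet}$. Finally, chaining the two matrix identities
\[
\begin{pmatrix}L_{\alpha,\bullet}&L_{\beta,\bullet}\end{pmatrix}=\begin{pmatrix}L_{\#,\bullet}&L_{\flat,\bullet}\end{pmatrix}M_{\p},\qquad
\begin{pmatrix}L_{\star,\alpha}&L_{\star,\beta}\end{pmatrix}=\begin{pmatrix}L_{\star,\#}&L_{\star,\flat}\end{pmatrix}M_{\bp}
\]
and reorganising the resulting four distributions into a single row vector yields the claimed factorisation by $M_{\p}\otimes M_{\bp}$; this last step is a routine algebraic verification. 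I expect the main technical obstacle to be the two-variable divisibility in the first stage: in contrast with the one-variable case, where divisibility reduces immediately to vanishing at the roots of $\det M_{\p}$, the bivariate version requires a Weierstrass-type argument in $R[[X]]$ (with $R$ a suitable algebra of distributions on $G_{\bp^\infty}$) in order to promote slice-wise vanishing of the numerator to genuine divisibility while controlling the growth of the quotient.
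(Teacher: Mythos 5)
Your two-stage strategy (factor out $M_\p$, then $M_{\bp}$, then chain the two matrix identities) is exactly the paper's, and your first stage agrees in substance with Proposition~\ref{prop:factorisation1}: pointwise vanishing of the numerators at the characters $\omega_\p\omega_{\bp}$, upgraded to divisibility by $\det M_\p$ in the two-variable ring, with the growth bookkeeping of Lemma~\ref{lem:factorisation}. (The paper's upgrade is not quite Weierstrass preparation: it specialises at each $\omega_\p$ and uses that a distribution on $G_{\bp^\infty}$ of growth $r<1$ vanishing at infinitely many characters in each isotypical component must be zero; your ``slice-wise vanishing'' is the right picture, but the rigidity input should be named.) The genuine divergence, and the place where your write-up has a gap, is the second stage. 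You assert that the interpolation property of $(L_{\star,\alpha},L_{\star,\beta})$ in the $\bp$-direction is ``inherited'' because $m_{i,j}^{\p}$ and $\det M_\p$ commute with $(-)^{(\omega_{\bp})}$. As stated this proves nothing: the common-constant structure you need concerns the values $L_{\#,\bullet}(\omega_\p\omega_{\bp})$, and these cannot be read off from the defining quotient because the numerator and $\det M_\p$ both vanish at $\omega_\p$ --- the evaluation is a $0/0$. The paper resolves this by a L'H\^opital computation, which is why it needs Lemma~\ref{lem:partial} on the partial derivatives $\partial_\p L_{\star,\bullet}(\omega)$. Your route can be repaired without derivatives, but only by first promoting the pointwise relations \eqref{eq:interpolation1}--\eqref{eq:interpolation4} to identities of one-variable distributions
\[
\beta^{n_{\bp}}L_{\alpha,\beta}^{(\omega_{\bp})}=\alpha^{n_{\bp}}L_{\alpha,\alpha}^{(\omega_{\bp})},\qquad
\beta^{n_{\bp}}L_{\beta,\beta}^{(\omega_{\bp})}=\alpha^{n_{\bp}}L_{\beta,\alpha}^{(\omega_{\bp})},
\]
again via the growth-$r<1$ rigidity; only then does the proportionality pass through the $X$-variable linear algebra and the division by $\det M_\p$ (which is not a zero divisor), giving $\beta^{n_{\bp}}L_{\#,\beta}^{(\omega_{\bp})}=\alpha^{n_{\bp}}L_{\#,\alpha}^{(\omega_{\bp})}$ and hence the required common constant upon evaluating at $\omega_\p$. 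This identity of distributions is precisely the first step of the paper's proof of Lemma~\ref{lem:partial}; once you make it explicit, your argument closes the gap and is, if anything, slightly more direct than the paper's derivative computation.
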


We shall prove this theorem in two steps, namely, to show that we can first factor out $M_\p$, then $M_{\bp}$.

\begin{proposition}\label{prop:factorisation1}
For $\star\in\{\alpha,\beta\}$, there exist $L_{\#,\star},L_{\flat,\star}\in D^{(0,v_p(\star))}(G_{p^\infty},F)$ such that
\begin{equation}\label{eq:factorisation1}
\begin{pmatrix}
L_{\alpha,\star}&L_{\beta,\star}
\end{pmatrix}
=\begin{pmatrix}
L_{\#,\star}&L_{\flat,\star}
\end{pmatrix}M_\p.
\end{equation}
\end{proposition}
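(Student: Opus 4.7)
The plan is to imitate the one-variable argument of Lemma~\ref{lem:factorisation} in the two-variable setting. We define
\[
L_{\#,\star} := \frac{m_{2,2}^{\p} L_{\alpha,\star} - m_{2,1}^{\p} L_{\beta,\star}}{\det(M_\p)}, \qquad L_{\flat,\star} := \frac{-m_{1,2}^{\p} L_{\alpha,\star} + m_{1,1}^{\p} L_{\beta,\star}}{\det(M_\p)};
\]
granted these definitions, \eqref{eq:factorisation1} follows immediately from the cofactor identity used in Lemma~\ref{lem:factorisation}. The two points requiring verification are: $\det(M_\p)$ genuinely divides each numerator inside the ring of two-variable distributions on $G_{p^\infty}$, and the resulting quotients live in $D^{(0,v_p(\star))}(G_{p^\infty},F)$.

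The heart of the argument is divisibility. Since $\det(M_\p)$ is a unit multiple of $\log_p(1+X)/X$, it depends only on $X$; working one $\Delta$-isotypical component at a time inside $F[[X]][[Y]]$, divisibility by $\det(M_\p)$ is equivalent to the vanishing of the numerator on every hyperplane $X=\omega_\p(\gamma_\p)-1$ with $\omega_\p\in\Omega_\p$. Specialising $X$ at such a point yields a one-variable distribution in $Y$ of order $v_p(\star)<1$; to show it vanishes identically, we evaluate it at an arbitrary $\omega_{\bp}\in\Omega_{\bp}$ of conductor $\bp^{n_{\bp}}$ and invoke the interpolation formulas \eqref{eq:interpolation1}--\eqref{eq:interpolation4} together with the explicit form of $M_\p(\omega_\p)$ computed inside the proof of Theorem~\ref{thm:matrix}, which reduces the expression to
\[
\star^{-n_{\bp}}C_{\omega_\p\omega_{\bp}}\bigl(m_{2,2}^{\p}(\omega_\p)\alpha^{-n_\p}-m_{2,1}^{\p}(\omega_\p)\beta^{-n_\p}\bigr)=0,
\]
and similarly for the second numerator. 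Since the parameters $\omega_{\bp}(\gamma_{\bp})-1$ accumulate at $Y=0$, the one-variable distribution in $Y$ must vanish identically, yielding the required divisibility.

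For the growth bound, the rows of $M_\p$ carry growth rates $r$ and $s$ respectively (by the proof of Theorem~\ref{thm:matrix}), and the identity $r+s=1$ (coming from $\alpha\beta=\epsilon(p)p$) together with the two-variable growth $(v_p(\bullet),v_p(\star))$ of $L_{\bullet,\star}$ places both numerators in $D^{(1,v_p(\star))}(G_{p^\infty},F)$. Dividing by $\det(M_\p)$, whose $X$-coefficients grow like those of $\log_p$, should reduce the $X$-exponent by $1$ while leaving the $Y$-exponent untouched, yielding the claimed membership in $D^{(0,v_p(\star))}$. The main technical obstacle is making this last step genuinely uniform in $Y$: after expanding the numerator as $\sum_j g_j(X)Y^j$ and dividing each $g_j$ by $\det(M_\p)$ via the one-variable estimate invoked in Theorem~\ref{thm:matrix}, one must control the resulting bounds \emph{uniformly} in $j$, so that the slice-wise quotients patch into a genuine two-variable distribution of the prescribed growth rather than a merely coefficient-by-coefficient divisibility.
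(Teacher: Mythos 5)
Your proposal follows essentially the same route as the paper: the same explicit formulas for $L_{\#,\star}$ and $L_{\flat,\star}$, the same reduction of divisibility by $\det(M_\p)$ to the vanishing of the numerators at all characters $\omega_\p\omega_{\bp}$ with $\omega_\p\in\Omega_\p$, $\omega_{\bp}\in\Omega_{\bp}$, and the same growth bookkeeping via $r+s=1$. (The paper obtains the vanishing by applying the defining divisibility property of the logarithmic matrix to the pair of interpolating functions $L_{\alpha,\star}^{(\omega_{\bp})}$, $L_{\beta,\star}^{(\omega_{\bp})}$, whereas you recompute $M_\p(\omega_\p)$ explicitly; these amount to the same computation.)

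One justification is wrong as stated: the points $\omega_{\bp}(\gamma_{\bp})-1$ do \emph{not} accumulate at $Y=0$. Since $v_p(\zeta-1)=1/(p^{n-1}(p-1))\to 0$ as the order of $\zeta$ grows, these points tend to the boundary of the open unit disc, and no identity-theorem argument is available in the $p$-adic setting anyway. The correct reason the specialised slice vanishes identically --- and the one the paper uses --- is the growth condition: a one-variable distribution of order $v_p(\star)<1$ vanishing at $\zeta-1$ for all $p$-power roots of unity $\zeta$ acquires too many zeros relative to its growth and must be zero. You do record that the slice has order $<1$, so only the stated reason needs repairing. Two smaller remarks: the growth rates $r$ and $s$ are attached to the \emph{columns} of $M_\p$ (as in the definition of a logarithmic matrix and as one reads off from \eqref{eq:diagonal}), not the rows --- this is precisely what makes the pairing of $m_{2,2}^{\p}\in D^{(s)}$ with $L_{\alpha,\star}\in D^{(r,v_p(\star))}$ and of $m_{2,1}^{\p}\in D^{(r)}$ with $L_{\beta,\star}\in D^{(s,v_p(\star))}$ land the numerators in $D^{(1,v_p(\star))}$; and the uniformity in $j$ of the division estimate, which you flag as an unresolved obstacle, is glossed over by the paper as well, which simply asserts divisibility over $D^{(1,r)}(G_{p^\infty},F)$ and concludes as in Lemma~\ref{lem:factorisation}.
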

\begin{proof}
We take $\star=\alpha$ (since the proof for the case $\star=\beta$ is identical). Let $\omega_\p\in\Omega_\p$ and $\omega_{\bp}\in\Omega_{\bp}$ and write $\omega=\omega_\p\omega_{\bp}$.

By \eqref{eq:interpolation1} and \eqref{eq:interpolation3}, $L_{\alpha,\alpha}^{(\omega_{\bp})}$ and $L_{\beta,\alpha}^{(\omega_{\bp})}$ is a pair of interpolating functions for $f$. In particular, $\det(M_\p)$ divides both 
\[
m_{2,2}^\p L_{\alpha,\alpha}^{(\omega_{\bp})}-m_{2,1}^\p L_{\beta,\alpha}^{(\omega_{\bp})}\quad\text{and}\quad -m_{1,2}^\p L_{\alpha,\alpha}^{(\omega_{\bp})}+m_{1,1}^\p L_{\beta,\alpha}^{(\omega_{\bp})}
\]
over $D^{(1)}(G_{\p^\infty},F)$. Therefore, the distributions
\[
m_{2,2}^\p L_{\alpha,\alpha}-m_{2,1}^\p L_{\beta,\alpha}\quad\text{and}\quad -m_{1,2}^\p L_{\alpha,\alpha}+m_{1,1}^\p L_{\beta,\alpha} 
\]
vanish at all characters of the form $\omega=\omega_\p\omega_{\bp}$. This implies that
\[
\left(m_{2,2}^\p L_{\alpha,\alpha}-m_{2,1}^\p L_{\beta,\alpha}\right)^{(\omega_\p)}=\left( -m_{1,2}^\p L_{\alpha,\alpha}+m_{1,1}^\p L_{\beta,\alpha}\right)^{(\omega_\p)}=0
\]
since these two distributions lie inside $D^{(r)}(G_{{\bp}^\infty},F')$ for some $F'$, with $r<1$, and they vanish at an infinite number of characters for each of their isotypical components. Hence, $\det(M_\p)$ divides 
\[
m_{2,2}^\p L_{\alpha,\alpha}-m_{2,1}^\p L_{\beta,\alpha}\quad\text{and}\quad -m_{1,2}^\p L_{\alpha,\alpha}+m_{1,1}^\p L_{\beta,\alpha} 
\]
over $D^{(1,r)}(G_{p^\infty},F)$. Let
\[
L_{\#,\alpha}:=\frac{m_{2,2}^\p L_{\alpha,\alpha}-m_{2,1}^\p L_{\beta,\alpha}}{\det(M_\p)}\quad\text{and}\quad L_{\flat,\alpha}:=\frac{-m_{1,2}^\p L_{\alpha,\alpha}+m_{1,1}^\p L_{\beta,\alpha}}{\det(M_\p)}.
\]
We may then conclude as in the proof of Lemma~\ref{lem:factorisation}.
\end{proof}

\begin{lemma}\label{lem:partial}
Let $\omega$ be a character of $G_{p^\infty}$ of conductor $\p^{n_\p}{\bp}^{n_{\bp}}$ with $n_\p,n_{\bp}\ge1$. There exist constants $D_\omega$ and $E_\omega$ in $\overline{F}$ such that
\begin{eqnarray*}
\partial_\p L_{\alpha,\alpha}(\omega)=\alpha^{-n_{\bp}}D_\omega,&&\partial_\p L_{\alpha,\beta}(\omega) =  \beta^{-n_{\bp}}D_\omega,\\
\partial_\p L_{\beta,\alpha}(\omega)=\alpha^{-n_{\bp}}E_\omega,&&\partial_\p L_{\beta,\beta}(\omega) =  \beta^{-n_{\bp}}E_\omega.
\end{eqnarray*}
\end{lemma}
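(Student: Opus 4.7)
The plan is to first establish the symmetric analogue of Proposition~\ref{prop:factorisation1}, in which $M_{\bp}$ is factored off the second index, and then to differentiate in the $\p$-direction. The interpolation formulae \eqref{eq:interpolation1}--\eqref{eq:interpolation4} are manifestly symmetric in $(\p,\bp)$: for any fixed $\omega_\p\in\Omega_\p$ and any $\star\in\{\alpha,\beta\}$, the pair $\bigl(L_{\star,\alpha}^{(\omega_\p)},L_{\star,\beta}^{(\omega_\p)}\bigr)$ is a pair of interpolating functions for $f$ on $G_{\bp^\infty}$. Running the argument of Proposition~\ref{prop:factorisation1} verbatim with the roles of $\p$ and $\bp$ interchanged therefore produces, for each $\star\in\{\alpha,\beta\}$, distributions $L_{\star,\#},L_{\star,\flat}\in D^{(v_p(\star),0)}(G_{p^\infty},F)$ satisfying
\[
\begin{pmatrix}L_{\star,\alpha}&L_{\star,\beta}\end{pmatrix}=\begin{pmatrix}L_{\star,\#}&L_{\star,\flat}\end{pmatrix}M_{\bp}.
\]

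Since the entries of $M_{\bp}$ all lie in $D^{(1)}(\langle\gamma_{\bp}\rangle,F)$ and depend only on $Y$, the operator $\partial_\p=\partial/\partial X$ commutes with multiplication by them; differentiating the factorisation entry by entry yields
\begin{align*}
\partial_\p L_{\star,\alpha}&=(\partial_\p L_{\star,\#})\,m_{1,1}^{\bp}+(\partial_\p L_{\star,\flat})\,m_{2,1}^{\bp},\\
\partial_\p L_{\star,\beta}&=(\partial_\p L_{\star,\#})\,m_{1,2}^{\bp}+(\partial_\p L_{\star,\flat})\,m_{2,2}^{\bp}.
\end{align*}
Evaluating at $\omega=\omega_\p\omega_{\bp}$ of conductor $\p^{n_\p}\bp^{n_{\bp}}$ and applying Theorem~\ref{thm:matrix} to $M_{\bp}$ at $\omega_{\bp}$ produces constants $A_{\omega_{\bp}},B_{\omega_{\bp}}\in\overline{F}$ for which the two columns of $M_{\bp}(\omega_{\bp})$ are $-\alpha^{-n_{\bp}}(A_{\omega_{\bp}},B_{\omega_{\bp}})^t$ and $-\beta^{-n_{\bp}}(A_{\omega_{\bp}},B_{\omega_{\bp}})^t$ respectively.

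Setting
\[
D_\omega:=-\bigl[A_{\omega_{\bp}}\,\partial_\p L_{\alpha,\#}(\omega)+B_{\omega_{\bp}}\,\partial_\p L_{\alpha,\flat}(\omega)\bigr],\quad E_\omega:=-\bigl[A_{\omega_{\bp}}\,\partial_\p L_{\beta,\#}(\omega)+B_{\omega_{\bp}}\,\partial_\p L_{\beta,\flat}(\omega)\bigr],
\]
one reads off the four claimed identities directly by substitution. The essential mechanism is the rank-one collapse of $M_{\bp}(\omega_{\bp})$ furnished by Theorem~\ref{thm:matrix}: both of its columns are proportional to the common vector $(A_{\omega_{\bp}},B_{\omega_{\bp}})^t$, and this is exactly what forces the same constant $D_\omega$ to govern both $\partial_\p L_{\alpha,\alpha}(\omega)$ and $\partial_\p L_{\alpha,\beta}(\omega)$, and likewise $E_\omega$ to govern both $\partial_\p L_{\beta,\alpha}(\omega)$ and $\partial_\p L_{\beta,\beta}(\omega)$. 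The only step that might look non-trivial is the symmetric factorisation in the opening paragraph, but it is a purely formal repetition of Proposition~\ref{prop:factorisation1} and presents no real obstacle.
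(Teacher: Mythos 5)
Your proof is correct, but it takes a genuinely different route from the paper's. The paper argues directly from the interpolation formulae \eqref{eq:interpolation1}--\eqref{eq:interpolation4}: for fixed $\omega_{\bp}$ the one-variable distributions $\beta^{n_{\bp}}L_{\alpha,\beta}^{(\omega_{\bp})}$ and $\alpha^{n_{\bp}}L_{\alpha,\alpha}^{(\omega_{\bp})}$ agree at every $\omega_\p\in\Omega_\p$, and since both lie in $D^{(r)}$ with $r<1$ they must be equal as distributions; differentiating this identity and evaluating at $\omega_\p$ gives the lemma with $D_\omega=\alpha^{n_{\bp}}\partial_\p L_{\alpha,\alpha}(\omega)$. (The paper's citation of \eqref{eq:factorisation1} and \eqref{eq:factorisation2} at that point is evidently a slip for the interpolation formulae, since \eqref{eq:factorisation2} is only proved afterwards using this lemma.) You instead first establish the mirror of Proposition~\ref{prop:factorisation1}, factoring $M_{\bp}$ off the second index of the original $L$-functions, and then exploit the rank-one collapse of $M_{\bp}(\omega_{\bp})$; this is heavier machinery but is non-circular (your intermediate objects $L_{\star,\#}$ for $\star\in\{\alpha,\beta\}$ are not those of \eqref{eq:factorisation2}), and each step checks out: the symmetric pair-of-interpolating-functions claim follows from \eqref{eq:interpolation1}--\eqref{eq:interpolation4}, $\partial_\p$ does pass through the $Y$-dependent entries of $M_{\bp}$, and the column proportionality $-\alpha^{-n_{\bp}}$ versus $-\beta^{-n_{\bp}}$ is exactly what the explicit computation of $M_p(\omega)$ in the proof of Theorem~\ref{thm:matrix} provides (note it is the proof, not the statement, of that theorem that you are invoking). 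What the paper's route buys is brevity and economy --- it needs only the rigidity of $D^{(r)}$-distributions with $r<1$; what your route buys is a stronger intermediate statement, namely the full $\bp$-side factorisation of the $L_{\star,\bullet}$ themselves, with the lemma falling out as a corollary of the degeneration of the logarithmic matrix at finite-order characters.
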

\begin{proof}
We only prove the result concerning $\partial_\p L_{\alpha,\alpha}$ and $\partial_\p L_{\alpha,\beta}$. Fix an $\omega_{\bp}\in\Omega_{\bp}$. By \eqref{eq:factorisation1} and \eqref{eq:factorisation2}, we have
\[
 \beta^{n_{\bp}}L_{\alpha,\beta}^{(\omega_{\bp})}(\omega_\p)=\alpha^{n_{\bp}}L_{\alpha,\alpha}^{(\omega_{\bp})}(\omega_\p)
\]
for all $\omega_\p\in \Omega_\p$. But $L_{\alpha,\beta}^{(\omega_{\bp})}, L_{\alpha,\alpha}^{(\omega_{\bp})}\in D^{(r)}(G_{\p^\infty},F')$ for some $F'$. As $r<1$, this implies that
\[
 \beta^{n_{\bp}}L_{\alpha,\beta}^{(\omega_{\bp})}=\alpha^{n_{\bp}}L_{\alpha,\alpha}^{(\omega_{\bp})}.
\]
In particular, their derivatives agree, that is
\[
\beta^{n_{\bp}}\partial_\p L_{\alpha,\beta}^{(\omega_{\bp})}=\alpha^{n_{\bp}}\partial_\p L_{\alpha,\alpha}^{(\omega_{\bp})}.
\]
But for a general $\mu\in D^{(r,s)}(G_{p^\infty},F)$, we have
\[
\partial\left(\mu^{(\omega_{\bp})}\right)(\omega_\p)= \partial_\p \mu(\omega_\p\omega_{\bp})
\]
for all $\omega_\p\in\Omega_\p$, hence
\[
\beta^{n_{\bp}}\partial_\p L_{\alpha,\beta}(\omega)=\alpha^{n_{\bp}}\partial_\p L_{\alpha,\alpha}(\omega)
\]
as required.
\end{proof}

\begin{proposition}
For $\star\in\{\#,\flat\}$, there exist $L_{\star,\#},L_{\star,\flat}\in D^{(0,0)}(G_{p^\infty},F)$ such that
\begin{equation}\label{eq:factorisation2}
\begin{pmatrix}
L_{\star,\alpha}&L_{\star,\beta}
\end{pmatrix}
=\begin{pmatrix}
L_{\star,\#}&L_{\star,\flat}
\end{pmatrix}M_{\bp}.
\end{equation}
\end{proposition}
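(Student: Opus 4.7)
The plan is to mirror the proof of Proposition~\ref{prop:factorisation1}, defining
\[
L_{\star,\#}:=\frac{m_{2,2}^{\bp}L_{\star,\alpha}-m_{2,1}^{\bp}L_{\star,\beta}}{\det(M_{\bp})},\qquad L_{\star,\flat}:=\frac{-m_{1,2}^{\bp}L_{\star,\alpha}+m_{1,1}^{\bp}L_{\star,\beta}}{\det(M_{\bp})},
\]
and showing that $\det(M_{\bp})$ divides both numerators with quotients in $D^{(0,0)}(G_{p^\infty},F)$. Since Proposition~\ref{prop:factorisation1} gives $L_{\star,\alpha}\in D^{(0,r)}$ and $L_{\star,\beta}\in D^{(0,s)}$, and the second row of $M_{\bp}$ has growth $r$ and $s$ in $Y$, the numerators lie in $D^{(0,1)}(G_{p^\infty},F)$ (using $r+s=1$). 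Because $\det(M_{\bp})$ is a scalar multiple of $\log_p(1+Y)/Y$ in $Y$ alone, the required divisibility amounts to showing that, for each non-trivial $p$-power root of unity $\zeta$, each numerator vanishes when $Y=\zeta-1$. The $(\omega_{\bp})$-specialisation of each numerator lies in $D^{(0)}(G_{\p^\infty},F')$, so, exactly as in the proof of Proposition~\ref{prop:factorisation1}, by Weierstrass preparation it suffices to verify vanishing at an infinite set of $\omega_\p$.

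The main step is to produce this infinite set. I take $\omega_\p\in\Omega_\p$ such that $\omega_\p(\gamma_\p)$ is a non-trivial $p$-power root of unity, i.e.\ those at which $\det(M_\p)$ vanishes; these form an infinite subset of $\Omega_\p$. For $\omega=\omega_\p\omega_{\bp}$ both sides of the defining identities
\[
L_{\#,\alpha}\det(M_\p)=m_{2,2}^\p L_{\alpha,\alpha}-m_{2,1}^\p L_{\beta,\alpha},\qquad L_{\#,\beta}\det(M_\p)=m_{2,2}^\p L_{\alpha,\beta}-m_{2,1}^\p L_{\beta,\beta}
\]
from Proposition~\ref{prop:factorisation1} vanish at $\omega$, so I apply $\partial_\p$ and then evaluate. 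Since $\log_p(1+X)/X$ has only simple zeros, $\det(M_\p)'(\omega_\p)\neq 0$, and the left-hand sides become $L_{\#,\alpha}(\omega)\det(M_\p)'(\omega_\p)$ and $L_{\#,\beta}(\omega)\det(M_\p)'(\omega_\p)$. Inserting the interpolation formulas \eqref{eq:interpolation1}--\eqref{eq:interpolation4} and Lemma~\ref{lem:partial} into the right-hand sides, the $\bp$-dependence factors out cleanly: the two right-hand sides become $\alpha^{-n_{\bp}}K$ and $\beta^{-n_{\bp}}K$ respectively, with the \emph{same} scalar
\[
K=(\partial_\p m_{2,2}^\p)(\omega_\p)\alpha^{-n_\p}C_\omega+m_{2,2}^\p(\omega_\p)D_\omega-(\partial_\p m_{2,1}^\p)(\omega_\p)\beta^{-n_\p}C_\omega-m_{2,1}^\p(\omega_\p)E_\omega,
\]
the coincidence of $K$ being precisely what Lemma~\ref{lem:partial} delivers. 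Hence $L_{\#,\alpha}(\omega)=\alpha^{-n_{\bp}}K_\omega$ and $L_{\#,\beta}(\omega)=\beta^{-n_{\bp}}K_\omega$ with the common constant $K_\omega=K/\det(M_\p)'(\omega_\p)$.

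Any pair of values of the form $(\alpha^{-n_{\bp}}K_\omega,\beta^{-n_{\bp}}K_\omega)$ is annihilated at $\omega_{\bp}$ by $(m_{2,2}^{\bp},-m_{2,1}^{\bp})$ and by $(-m_{1,2}^{\bp},m_{1,1}^{\bp})$, by the one-variable identity established in the proof of Theorem~\ref{thm:matrix} applied to $M_{\bp}$. This yields the required vanishing of both numerators at $\omega$ for every such $\omega_\p$, completing the Weierstrass step of the first paragraph. The case $\star=\flat$ proceeds identically starting from the defining identities for $L_{\flat,\alpha}$ and $L_{\flat,\beta}$, and one concludes as in Lemma~\ref{lem:factorisation} to obtain \eqref{eq:factorisation2}. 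The main obstacle is the need for Lemma~\ref{lem:partial}: it is precisely what forces the two right-hand sides to share the same $K$, so that the values $(L_{\star,\alpha}(\omega),L_{\star,\beta}(\omega))$ form an interpolating pair with respect to $M_{\bp}$; without this coincidence, the argument would fail at the very infinite family of $\omega_\p$ required for the Weierstrass step.
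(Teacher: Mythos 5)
Your proof is correct and follows essentially the same route as the paper: both differentiate the identity $L_{\#,\bullet}\det(M_\p)=m_{2,2}^\p L_{\alpha,\bullet}-m_{2,1}^\p L_{\beta,\bullet}$ at characters where $\det(M_\p)$ vanishes and use Lemma~\ref{lem:partial} to obtain $(L_{\#,\alpha}(\omega),L_{\#,\beta}(\omega))=(\alpha^{-n_{\bp}}K_\omega,\beta^{-n_{\bp}}K_\omega)$, which is exactly the interpolating-pair property the paper then feeds into the argument of Proposition~\ref{prop:factorisation1} with the roles of $\p$ and $\bp$ swapped. Your only (harmless) deviations are that you unwind the ``pair of interpolating functions'' step into the explicit pointwise vanishing identity from the proof of Theorem~\ref{thm:matrix}, and that you correctly restrict to those $\omega_\p$ with $\omega_\p(\gamma_\p)\neq1$, a point the paper glosses over.
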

\begin{proof}
Let us prove the proposition for $\star=\#$.  Let $\omega_\p\in\Omega_\p$ and $\omega_{\bp}\in\Omega_{\bp}$ and write $\omega=\omega_\p\omega_{\bp}$. Recall that
\[
L_{\#,\bullet}\det(M_\p)=m_{2,2}^\p L_{\alpha,\bullet}-m_{2,1}^\p L_{\beta,\bullet}
\]
for $\bullet\in\{\alpha,\beta\}$. Since $\det(M_\p)$ is, up to a non-zero constant in $F^{\times}$, equal to $\log_p(1+X)/X$, we have $\det(M_p)(\omega_\p)=0$ and  $\partial_\p\det(M_\p)(\omega_\p)\ne0$. On taking partial derivatives, Lemma~\ref{lem:partial} together with \eqref{eq:interpolation1}-\eqref{eq:interpolation4} imply that
\[
L_{\#,\bullet}(\omega)=(\bullet)^{-n_{\bp}}\frac{K_\omega}{\partial_\p\det(M_\p)(\omega_\p)},
\]
where $K_\omega$ is the constant
\[
m_{2,2}^\p(\omega_\p)D_\omega+\partial_\p m_{2,2}^\p(\omega_\p) \alpha^{-n_\p}C_\omega-m_{2,1}^\p(\omega_\p) E_\omega-\partial_\p m_{2,1}^\p(\omega_\p)\beta^{-n_\p} C_\omega.
\]
In particular, we see that $L_{\#,\alpha}^{(\omega_\p)}$ and $L_{\#,\beta}^{(\omega_\p)}$ is a pair of interpolating functions for $f$, so we may proceed as in the proof of Proposition~\ref{prop:factorisation1} (with the roles of $\p$ and $\bp$ swapped).
\end{proof}

Combining the factorisations \eqref{eq:factorisation1} and \eqref{eq:factorisation2}, we obtain Theorem~\ref{thm:main}.

\subsection*{Acknowledgement}
The author would like to thank David Loeffler and Henri Darmon for their very helpful comments.

\bibliographystyle{amsalpha}
\bibliography{../references}

\end{document}